\let\mathcal\mathscr
\numberwithin{equation}{section}
\newtheorem{theorem}{Theorem}[section]
\newtheorem{lemma}[theorem]{Lemma}
\theoremstyle{definition}
\newtheorem*{ack}{Acknowledgements}
\newtheorem{rem}[theorem]{Remark}
\newtheorem{definition}[theorem]{Definition}
\renewcommand{\d}{\mathrm{d}}
\renewcommand{\phi}{\varphi}
\newcommand{\0}{\mathbf{0}}
\newcommand{\PP}{\mathbb{P}}
\renewcommand{\AA}{\mathbb{A}}
\newcommand{\A}{\mathbf{A}}
\newcommand{\FF}{\mathbb{F}}
\newcommand{\ZZ}{\mathbb{Z}}
\newcommand{\NN}{\mathbb{N}}
\newcommand{\QQ}{\mathbb{Q}}
\newcommand{\RR}{\mathbb{R}}
\newcommand{\CC}{\mathbb{C}}
\newcommand{\cO}{\mathcal{O}}
\newcommand{\cL}{\mathcal{L}}
\newcommand{\Gal}{{\rm Gal}}
\renewcommand{\leq}{\leqslant}
\renewcommand{\geq}{\geqslant}
\renewcommand{\bar}{\overline}
\renewcommand{\a}{\mathbf{a}}
\newcommand{\ve}{\varepsilon}
\DeclareMathOperator{\rank}{rank}
\DeclareMathOperator{\Proj}{Proj}
\DeclareMathOperator{\disc}{disc}
\DeclareMathOperator{\Pic}{Pic}
\DeclareMathOperator{\NS}{NS}
\DeclareMathOperator{\Image}{Im}
\renewcommand{\t}{\mathbf{t}}
\newcommand{\eeq}{\end{equation}}
\newcommand{\beql}[1]{\begin{equation}\label{#1}}
\begin{document}

\title[The Cayley ruled cubic]{Counting rational points on the\\ Cayley ruled cubic}
\author{R. de la Bret\`eche}
\author{T.D.\ Browning}
\author{P. Salberger}

\address{
Institut de Math\'ematiques de Jussieu 
--- Paris Rive Gauche\\
 UMR 7586\\
Universit\'e Paris Diderot\\ 
B\^atiment Sophie Germain\\ 
75205 Paris cedex 13\\ France}
\email{regis.de-la-breteche@imj-prg.fr}

\address{School of Mathematics\\
University of Bristol\\ Bristol\\ BS8 1TW\\ UK}
\email{t.d.browning@bristol.ac.uk}

\address{Chalmers University of
    Technology\\
G\"oteborg SE-412 96\\ Sweden}
\email{salberg@chalmers.se}

\date{\today}

\thanks{2010  {\em Mathematics Subject Classification.} 11G35 (11G50, 14G05)}

\begin{abstract}
We count rational points of bounded height on the Cayley ruled cubic surface and interpret the result in the context of general conjectures due to  Batyrev and Tschinkel.
\end{abstract}

\maketitle
\setcounter{tocdepth}{1}
\tableofcontents

\thispagestyle{empty}

\section{Introduction}

\maketitle

The arithmetic of singular cubic surfaces $S\subset \PP^3$ 
has long  been the subject of intensive study. When $S$ is defined over $\QQ$ and has  
isolated ordinary singularities then 
the set $S(\QQ)$ of
rational points on $S$ is Zariski dense in $S$ as 
soon as it is non-empty. 
Under this hypothesis, a finer measure of density is achieved by studying the counting function
$$
N(U;B)=\#\{t \in U(\QQ): H(t) \leq B\},
$$
where $H:S(\QQ)\to \RR_{>0}$ is an anticanonical height function and $U\subset S$ is obtained by deleting the lines from $S$.   

The conjectures of Manin \cite{f-m-t} and Peyre \cite{Pey03} give a precise
prediction for the asymptotic behaviour of $N(U;B)$, as  $B \rightarrow \infty$, 
for normal del Pezzo surfaces in terms of certain invariants associated
to a minimal resolution. The conjecture has now been resolved for several 
singular cubic surfaces over $\QQ$. Most recently, for example,  Le Boudec \cite{boudec}
has handled a 
cubic surface with singularity type 
 $\mathbf{D}_4$ (see the references therein for earlier work on this topic). 
However, the conjectures of Manin and Peyre offer no prediction
for cubic surfaces with non-isolated singularities. Indeed, the asymptotics
for such surfaces are different as they contain infinitely many lines.

The primary goal of this paper is to study the counting function for a particular non-normal cubic surface and to show that the resulting asymptotic formula can still be interpreted in the context of a much more general suite of conjectures due to Batyrev and Tschinkel \cite{BT}.  
According to 
Dolgachev \cite[Thm~9.2.1]{dolt}, any  irreducible non-normal cubic surface over $\QQ$ is either 
a cone over an irreducible singular plane cubic, or it is projectively equivalent to one of the (non-isomorphic) 
surfaces
\begin{equation}\label{eq:other-one}
t_0^2t_2-t_1^2t_3=0
\end{equation}
or 
\begin{equation}\label{eq:cayley}
t_0t_1t_2-t_0^2t_3-t_1^3=0,
\end{equation}
both of which are singular along the line $t_0=t_1=0$.
 These surfaces arise as different projections of the cubic scroll in $\PP^4$,  which is isomorphic to  the (ruled) {\em Hirzebruch surface} $\FF_1$ (i.e. a del Pezzo surface of degree $8$).

\begin{figure}
\begin{center}
\includegraphics[scale=0.15]{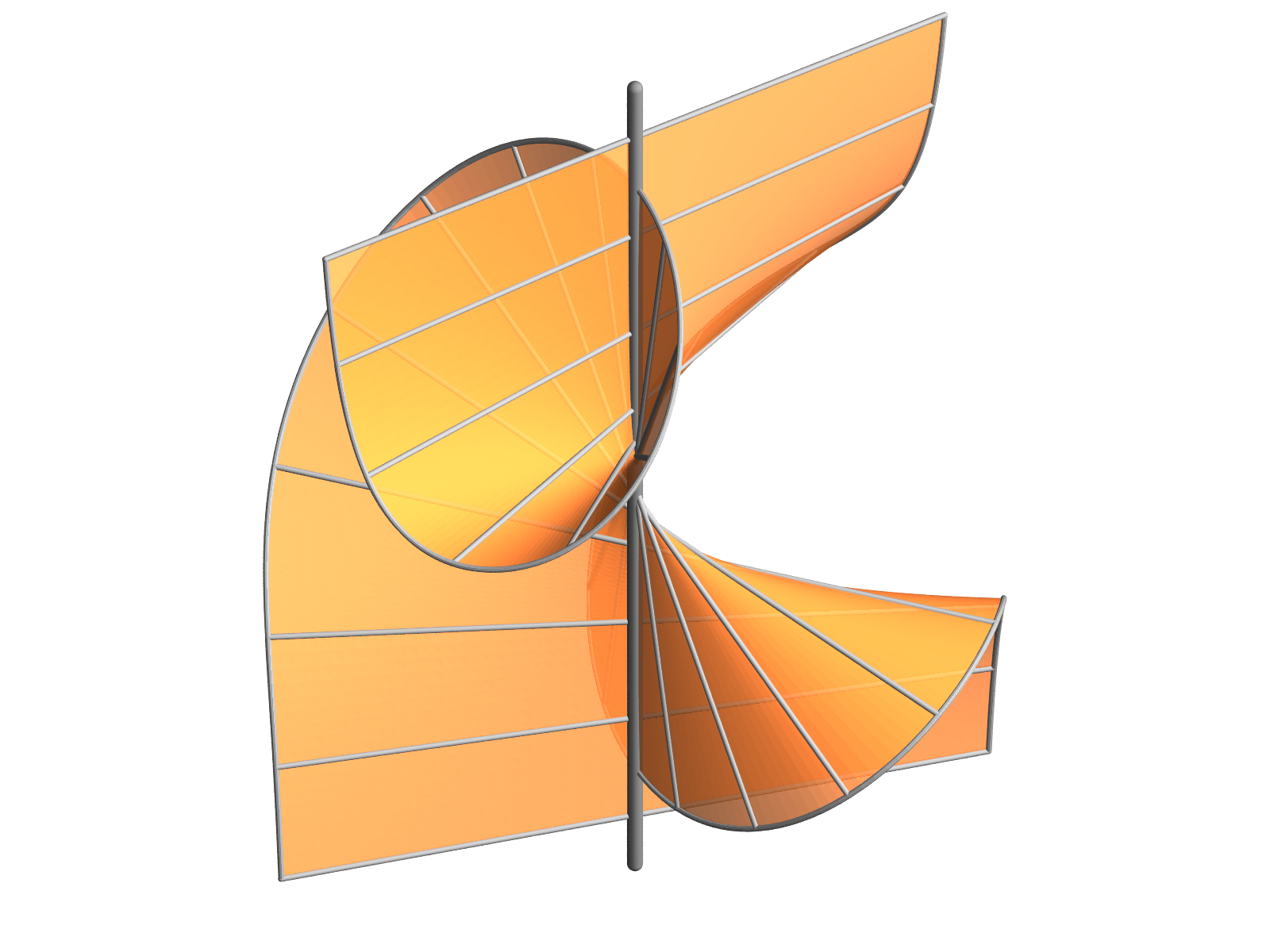}
\end{center}
\caption{The Cayley ruled cubic surface}\label{fig}
\end{figure}

For the remainder of this paper we will focus exclusively on the cubic surface \eqref{eq:cayley}, illustrated in Figure \ref{fig}.
This is called the 
{\em Cayley ruled surface} and we will denote it by 
 $W\subset \PP^3$. 
While \eqref{eq:other-one} is plainly toric the Cayley surface is not toric. Indeed, 
according to Gmeiner and Havlicek \cite[Lemma 3.1]{cayley}, the  automorphism group of $W$ is a 3-dimensional algebraic group, which contains a  
2-dimensional unipotent subgroup. Thus there is no $2$-dimensional torus acting faithfully on $W$.

Let $V=W\setminus\{t_0=t_1=0\}$  be the complement of the double line in $W$. 
Clearly $V\cong \AA^2$.
 Finally, we take our height function $H:V(\QQ)\to \RR_{>0}$ to be 
 metrized by the Euclidean norm. (i.e. $H(t)=\|\t\|:=\sqrt{t_0^2+\dots+t_3^2}$ if 
$t$ is represented by a primitive vector $\t\in \ZZ_{\mathrm{prim}}^4$.)
It then follows from a computation of Serre 
\cite[\S 2.12]{serre} that $N(V;B)=O_{V} (B^{2})$.
We are able to establish a precise  asymptotic formula, as follows.

\begin{theorem}\label{t:main}
We have 
$$
N(V;B)=
\frac{\pi B^2}{2\zeta(2)} \sum_{\substack{(\lambda,\mu)\in \ZZ_{\mathrm{prim}}^2\\
\mu\neq 0}} \frac{1}{\sqrt{f(\lambda,\mu)}}
+O(B^{3/2}\log B),
$$
where $f(\lambda,\mu)=\lambda^6+2\lambda^4\mu^2+\lambda^2\mu^4+\mu^6$. 
\end{theorem}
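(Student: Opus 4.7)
The plan is to exploit the ruled structure of $W$: the projection $(t_0 : t_1 : t_2 : t_3) \mapsto (t_0 : t_1)$ expresses $V$ as a union of affine lines (the rulings) indexed by $(\lambda : \mu) \in \PP^1(\QQ)$, and the counting problem decomposes over the rulings.

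First I would establish an explicit parametrisation. Given a primitive $\t \in \ZZ^4_{\mathrm{prim}}$ representing a point of $V$, write $(t_0, t_1) = d (\lambda, \mu)$ with $(\lambda, \mu) \in \ZZ^2_{\mathrm{prim}}$ and $d \in \ZZ$. The defining equation reduces to $\lambda \mu t_2 - \lambda^2 t_3 = d \mu^3$, and when $\mu \neq 0$ the coprimality of $\lambda, \mu$ forces $\lambda \mid d$. Writing $d = \lambda a$ and solving for $t_2, t_3$ then yields
$$
\t = (\lambda^2 a,\ \lambda \mu a,\ a\mu^2 + \lambda b,\ \mu b), \qquad a, b \in \ZZ.
$$
A case analysis on primes dividing $\gcd(\t)$ shows that primitivity of $\t$ is equivalent to $\gcd(a, b) = 1$, and one checks that the map $(\lambda, \mu, a, b) \mapsto \t$ is finite-to-one with an explicit multiplicity coming from the sign ambiguities in $(\lambda, \mu)$ and the two lifts $\pm \t$ of each rational point.

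Next, a direct expansion gives
$$
\|\t\|^2 = (\lambda^4 + \lambda^2 \mu^2 + \mu^4)\, a^2 + 2 \lambda \mu^2\, a b + (\lambda^2 + \mu^2)\, b^2 =: Q_{\lambda, \mu}(a, b),
$$
a positive-definite binary quadratic form whose discriminant (up to a factor of $4$) is precisely $f(\lambda, \mu)$. For each fixed primitive $(\lambda, \mu)$ the task thus reduces to counting primitive pairs $(a, b) \in \ZZ^2$ in the ellipse $Q_{\lambda, \mu}(a, b) \leq B^2$. M\"obius inversion combined with Gauss's lattice-point estimate gives
$$
\#\{(a, b) \in \ZZ^2_{\mathrm{prim}} : Q_{\lambda, \mu}(a, b) \leq B^2\} = \frac{\pi B^2}{\zeta(2)\sqrt{f(\lambda, \mu)}} + O\left(\frac{B \log B}{\sqrt{\sigma(\lambda, \mu)}}\right),
$$
where $\sigma(\lambda, \mu)$ is the smaller eigenvalue of $Q_{\lambda, \mu}$, satisfying $\sigma(\lambda, \mu) \gg \max(|\lambda|, |\mu|)^2$.

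Finally, I would sum over $(\lambda, \mu) \in \ZZ^2_{\mathrm{prim}}$ with $\mu \neq 0$. With the appropriate multiplicity the main terms reassemble into the claimed asymptotic. For the error, I would truncate at $M_0 := \sqrt{B/\log B}$: the contribution from small pairs is bounded using $\sum_{\max \leq M_0} 1/\sqrt{\sigma(\lambda, \mu)} \ll M_0$, giving an error of $O(M_0 B \log B) = O(B^{3/2}(\log B)^{1/2})$, while the truncation tail is $O(B^2 \sum_{n > M_0} n^{-2}) \ll B^2/M_0 = O(B^{3/2}(\log B)^{1/2})$ by the decay $1/\sqrt{f(\lambda, \mu)} \ll \max(|\lambda|, |\mu|)^{-3}$; combining these gives the stated $O(B^{3/2}\log B)$.

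The principal obstacle is the error analysis: maintaining a shape-uniform lattice-point bound across the increasingly elongated ellipses $\{Q_{\lambda, \mu} \leq B^2\}$ that arise for large $(\lambda, \mu)$, sharp enough to survive the M\"obius inversion, is delicate. A secondary technical point is the correct bookkeeping for the degenerate ruling $(\lambda, \mu) = (\pm 1, 0)$, corresponding to the line $\{t_1 = t_3 = 0\} \subset W$: its rational points contribute of order $B^2$ to $N(V; B)$, and one must verify that the choice to sum over $\mu \neq 0$ absorbs this contribution consistently through the multiplicities and sign conventions built into the parametrisation.
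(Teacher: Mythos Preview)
Your approach is essentially the paper's first proof (\S\ref{s:first}): decompose $V$ into its rulings, parametrise each ruling by primitive integer pairs, reduce to counting primitive lattice points in a family of ellipses via M\"obius inversion, and sum over rulings---the only difference being that the paper notes $N(V_y;B)=0$ once $\max(|\lambda|,|\mu|)>\sqrt{B}$, which replaces your truncation at $M_0$. Your flagged worry about the degenerate ruling is a non-issue caused by a labelling swap: since you set $(t_0,t_1)=d(\lambda,\mu)$ whereas the paper's lines satisfy $\lambda t_0=\mu t_1$, your $(\lambda,\mu)$ is the paper's $(\mu,\lambda)$, so the theorem's constraint $\mu\neq 0$ is precisely your $\lambda\neq 0$ (automatic on $V$, where $t_0\neq 0$), and the line $t_1=t_3=0$ is already included in the sum with the correct weight.
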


Since $W$ is not toric this result is not implied by work of Batyrev and Tschinkel \cite{BTa}.
In \S \ref{s:BT}  we will prove  that this result is compatible with some very general conjectures of Batyrev and Tschinkel 
\cite{BT}
about ``weakly $\mathcal{L}$-saturated'' smooth quasi-projective varieties.
The first step involves constructing an explicit desingularisation of $W$, which we record here for the sake of convenience. 

\begin{theorem}\label{t:resolve}
Let $X\subset \PP^2\times \PP^1$ be the biprojective surface with coordinates $(x_0, x_1, x_2; y_1, y_2)$ defined by $x_1y_2=x_2y_1$.  Then the morphism $\phi: X \to W$ defined by
$$
\phi(x_0, x_1, x_2; y_1, y_2)= (x_1y_1 , x_1y_2 , x_0y_1+ x_2y_2, x_0y_2)
$$
is a desingularisation of $W$ such that the open subvariety $U$ of $X$ where $x_1\neq 0$ is sent 
isomorphically onto the subset $V$ of W where $t_0\neq 0$.
\end{theorem}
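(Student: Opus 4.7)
The plan is to proceed in four steps, corresponding to the four things the statement asserts: well-definedness of $\phi$, smoothness of $X$, the isomorphism $U \xrightarrow{\sim} V$, and finally that $\phi$ is a desingularisation.

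\emph{Step 1: $\phi$ is a well-defined morphism with image in $W$.} Each of the four components of $\phi$ is bihomogeneous of bidegree $(1,1)$ in $(x_0:x_1:x_2\,;\,y_1:y_2)$, so the map to $\PP^3$ is well-defined provided $\phi$ has no base point on $X$. A short case analysis (splitting on whether $x_1,y_1,y_2$ vanish and using $x_1y_2=x_2y_1$) shows that $\phi(x;y)=\mathbf{0}$ forces both $(x_0,x_1,x_2)=0$ and $(y_1,y_2)=0$, which is excluded. That the image lies in $W$ follows from the direct identity
\[
t_0t_1t_2-t_0^2t_3-t_1^3 = x_1^2y_2^2\bigl(x_2y_1-x_1y_2\bigr),
\]
which vanishes identically on $X$.

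\emph{Step 2: $X$ is smooth and irreducible.} The form $x_1y_2-x_2y_1$ is irreducible (bilinear of bidegree $(1,1)$), so $X$ is an irreducible prime divisor in $\PP^2\times\PP^1$. Projecting $X\to \PP^1$ via $(x;y)\mapsto (y_1:y_2)$, the fibre over a point $(y_1:y_2)$ is the line $\{(x_0:x_1:x_2):x_1y_2-x_2y_1=0\}\cong\PP^1$ in $\PP^2$, so the projection realises $X$ as a $\PP^1$-bundle over $\PP^1$; in particular $X$ is smooth (and, unsurprisingly, isomorphic to the Hirzebruch surface $\FF_1$ alluded to in the introduction).

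\emph{Step 3: $\phi|_U\colon U\to V$ is an isomorphism.} On $U$ the relation $x_1y_2=x_2y_1$ with $x_1\neq0$ forces $y_1\neq 0$, so normalising $x_1=y_1=1$ gives $y_2=x_2$ and identifies $U$ with $\AA^2$ in the coordinates $(x_0,x_2)$. The restriction of $\phi$ then becomes
\[
(x_0,x_2)\longmapsto (1:x_2:x_0+x_2^2:x_0x_2),
\]
which lies in the chart $t_0\neq0$ of $W$. Since $V$ is itself $\AA^2$, parametrised by $(t_1,t_2)$ with $t_3=t_1t_2-t_1^3$, one reads off the explicit inverse $(t_1,t_2)\mapsto(x_0,x_2)=(t_2-t_1^2,t_1)$, which exhibits $\phi|_U$ as an isomorphism onto $V$.

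\emph{Step 4: conclusion.} $X$ is projective, hence $\phi$ is proper; $\phi|_U$ is an isomorphism onto the dense open $V\subset W$, hence $\phi$ is birational; and $X$ is smooth by Step~2. Therefore $\phi\colon X\to W$ is a resolution of singularities. The only step requiring real care is the base-point analysis in Step~1; every other step is either a one-line polynomial identity or a chart-by-chart unwinding of the definitions. As a final sanity check one may verify that $\phi^{-1}(\{t_0=t_1=0\})=X\cap\{x_1=0\}$, confirming that the exceptional locus lies over the singular double line of $W$.
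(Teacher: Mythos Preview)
Your proof is correct and follows essentially the same route as the paper: the core is the affine computation in Step~3 showing $\phi|_U$ is an isomorphism onto $V$, which the paper organises slightly differently by factoring $\phi|_U$ as the composite of the projection $\pi_1\colon U\to O_1\subset\PP^2$ (onto the chart $x_1\neq 0$) with an explicit quadratic map $f_1\colon O_1\to V$ given by $t_i=Q_i(x_0,x_1,x_2)$, each of which is separately seen to be an isomorphism. Your Steps~1 and~2 (base-point freeness, the identity $t_0t_1t_2-t_0^2t_3-t_1^3=x_1^2y_2^2(x_2y_1-x_1y_2)$, smoothness of $X$) are left implicit in the paper but are handled correctly here.
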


The surface $X$ is isomorphic to $\FF_1$ and it is also the  normalisation of $W$ (see Remark \ref{rem:n}). 
Despite starting with an anticanonical counting problem for the singular cubic surface $W$, Theorem \ref{t:resolve} leads 
to a counting problem for the non-singular surface $X$, 
endowed with an ample but  {\em non}-anticanonical linear system. 
For $m>1$, Billard \cite{billard} has provided precise asymptotics for counting functions associated to the Hirzebruch surface $\FF_m$ 
endowed with a {\em general} complete linear system.  For $m=1$, the case of primary interest to us,
work of Chambert-Loir and Tschinkel \cite[Thm.~4.16]{CL-T} handles the corresponding counting problem  
associated to a particular choice of metric.

\medskip

We will offer two very different  proofs of Theorem \ref{t:main}.
It should be emphasised that both methods are capable of producing asymptotic formulae for counting functions associated to other non-normal surfaces. Handling the
cubic surface 
\eqref{eq:other-one}, for example, is  easier than  $W$ and 
leads to similar asymptotic behaviour.

 The simplest proof of Theorem \ref{t:main} is found in \S \ref{s:first}. It relies on an explicit realisation of the Fano variety
 $F_1(W)\subset \mathbb{G}(1,3)\subset \PP^5$,  parametrising lines on  $W$, as  the union of an isolated point and a twisted cubic.  A standard result from the geometry of numbers is then invoked to handle the contribution from the rational points on the lines. 
 
The second approach is  found in \S \ref{s:second}.
It uses the fact that $W$ is an equivariant compactification of the additive algebraic group $\mathbb{G}_a^2$, in order to
study 
the analyticity  of the associated  height zeta function using adelic Poisson summation. 
This argument  is modelled on the methods of Chambert-Loir and Tschinkel \cite[\S 3]{CL-T},
which were developed to study equivariant compactifications of vector groups. A noteworthy
 feature of the proof    is that we get contributions to the main term from some of the non-trivial characters.
The counting 
 function $N(V;B)$ 
can be  interpreted as a counting function on $X$ 
endowed with an ample line bundle of bidegree $(1,1)$ and a certain metric  which is inherited from the singular model $W$ (see \S \ref{s:BT}).  This counting function is related to the counting function on $X$ considered 
in \cite[Thm.~4.16]{CL-T}, but the latter does not imply Theorem \ref{t:main} since it involves a different metric.

\begin{rem}
Although we are concerned here with rational points on $W$,  the problem of counting integer points on any affine model  is also of interest.  For  either of the affine   surfaces 
$xyz=x^2+y^3$ or $xy=x^2z+y^3$  it is possible to  show that the number of integers $(x,y,z)\in (\ZZ\cap [-B,B])^3$ has order of magnitude $B$.  This is in agreement with the {\em affine surface hypothesis} proposed in \cite{pila}.
\end{rem}

\begin{ack} 
While working on this paper the 
 first author was supported by an {\em IUF Junior} and the
second author
was  supported by {\em ERC grant} \texttt{306457}. 
The authors are very grateful to Professor Hans Havlicek for allowing us to include  Figure \ref{fig} which was created by him, and to the anonymous referee for some useful comments.
\end{ack}

\section{The Batyrev--Tschinkel conjecture}\label{s:BT}

Let us begin by establishing Theorem \ref{t:resolve}.
Let $\pi: X\to  \PP^2$ be 
the projection from $(x_0, x_1, x_2; y_1, y_2)$ to $(x_0, x_1, x_2)$ and let $O_1\subset  \PP^2$
be the
open subset where $x_1\neq 0$. Then $\pi$ 
restricts to an isomorphism $\pi_1:U\to O_1$.
Next, let $O\subset  \PP^2$ be the open subset where $(x_1, x_2)\neq (0, 0)$. 
There is then a morphism $f: O\to V$ defined by $t_i = Q_i(x_0, x_1, x_2)$,  for $0\leq i\leq 3$, where
\begin{align*}
Q_0(x_0, x_1, x_2) &= x_1^2,  \\             
Q_1(x_0, x_1, x_2) &= x_1x_2,  \\                  
Q_2(x_0, x_1, x_2) &= x_0x_1+x_2^2,  \\     
Q_3(x_0, x_1, x_2) &= x_0x_2.
\end{align*} 
This morphism restricts to an isomorphism $f_1: O_1\to V$,  with corresponding inverse $V\to O_1$  such that 
$(1, t_1/t_0 , t_2/t_0 , t_3/t_0)$  is sent to  
$$
(x_0/x_1 ,1, x_2/x_1)= (-(t_1/t_0)^2+t_2/t_0,1, t_1/t_0).
$$
Since $\phi=f_1\circ \pi_1 $ on $U$, it follows that $\phi$ restricts to an isomorphism $\phi: U\to V$, as desired.

\begin{rem}\label{rem:n}
The morphism $\phi: X\to W$ is finite since it is projective and quasi-finite (see \cite[Ex.~III.11.2]{hart}). 
Since $\phi$ is birational, furthermore,  it is therefore the normalisation of $W$ (see \cite[Ex.~12.20]{GW}).
\end{rem}

We now proceed to recast the counting function $N(V;B)$ in the language of adelic metrics.
Let  $|\cdot|_p$ be the usual absolute value on  $\QQ_p$ defined by 
$|p^\nu x|_p=p^{-\nu}$ if  $\nu\in \ZZ$ and  $x\in U_p=\ZZ_p^*$. 
Let $M=\cO_W(1)$ and let $s_0,\dots,s_3$ be the global sections of $M $ given by the coordinates $t_0,\dots,t_3$
of $\PP^3$. We may then define a $p$-adic norm 
$\|\cdot\|_p$ on $M$ by 
$$
  \|s(w_p)\|_p=\min_i |(s/s_i)(w_p)|_p,
$$
for a local section  $s$ of $M$ at a point $w_p\in W(\QQ_p)$ and where $i\in \{0,1,2,3\}$  
runs over the global sections $s_i$ such that $s_i(w_p)\neq 0$.  
At the archimedean place we define a real norm $\|\cdot \|_\infty$ on $M$ by
\begin{equation}\label{eq:arch}
\|s(w_\infty)\|_\infty=\left(\sum_i |s_i/s(w_\infty)|^2\right)^{-1/2}
\end{equation}
for a local section  $s\neq 0$ of $M$ at a point $w_\infty\in W(\RR)$.

Now let $\|\cdot\|_v$ denote 
$\|\cdot\|_\infty$ or $\|\cdot\|_p$ for a prime $p$.
Then we get an adelic metric $(\|\cdot\|_v)$  on $M$ as in Peyre \cite{Pey95} and a height on $W(\QQ)$ defined by
$$
H(w)=\prod_v \|s(w)\|_v^{-1},
$$
for a rational point $w$ on $W$ and a local section $s$ of $M$ with $s(w)\neq 0$. This height does not depend on the choice of $s$. For a rational point $P$ on $V$  represented by $(1, t_1, t_2, t_3)$, we may (for example) choose $s$ to be $s_0$, which gives
$$
 H(P)=\sqrt{1+ t_1^2+ t_2^2+ t_3^2}\prod_p \max\{1, |t_1|_p ,
 |t_2|_p, |t_3|_p\}.
 $$
We are then interested in the counting function
$$N(V;B)= \#\{P\in V(\QQ) : H(P)\leq B\}.$$
The  main goal of this section is to give an  explicit description of what the conjectures of Batyrev and Tschinkel
\cite{BT}
 predict for the asymptotic behaviour  of $N(V; B)$, as $B\rightarrow \infty$.

\medskip

For $k>0$ and a place $v$ of $\QQ$, there exists a $v$-adic norm 
$\|\cdot \|_{k,v}$ on $M^{\otimes k}$
such that 
$$\| s^k(w_v) \|_{k,v}=
\| s(w_v) \|_{v}^k$$ for any 
local section $s$ of $M$ at a point $w_v\in W(\QQ_v)$. 
For $k=0$, let $M^{\otimes k} = \cO_W$ and denote by
$\| \cdot \|_{0,v}$ the trivial metric given by 
$\| g(w_v) \|_{0,v}=|g(w_v)|_v$ for a local continuous function $g : N_v\to  \QQ_v$ defined on an open $v$-adic analytic neighbourhood of $N_v\subset  W(\QQ_v)$.  For $k \not\in \{1,2\}$ we shall  only consider the $v$-adic norm
$\| \cdot \|_{k,v}$ at the archimedean place $v=\infty$, where $\QQ_v=\RR$. In this setting we will  use the formula 
\eqref{eq:arch} to define a norm on $M$  for complex points $w_\infty\in W$ and then extend the above definition of power norms 
$\| \cdot \|_{k,\infty}$ to complex points on $W$. 

Now let
$L=\cO_V(1)$ be
the restriction of $M=\cO_W(1)$ to the open subset $V\subset W$ and let 
$L^{\otimes k}=\cO_V(k)$ for $k\geq 0$.
Then, for $k\geq 0$, 
 $\cL=(L, \|\cdot\|_\infty)$  and 
$\cL^{\otimes k}=(L^{\otimes k}, \|\cdot\|_{k,\infty})$
are {\em metrized invertible sheaves} in the notation of \cite[Def.~2.1.1]{BT}.

\begin{definition}
Let $H^0_{\mathrm{bd}}(V, \cL^{\otimes k})$ be the set of $s\in H^0(V,M^{\otimes k})$ for which $\| s \|_{k,\infty}$ is bounded on $V(\CC)$. Let $A(V, \cL)= \bigoplus_{k\geq 0}  H^0_{\mathrm{bd}}(V, \cL^{\otimes k})$.
\end{definition}

Next we recall that $\phi:X\to W$ restricts to an isomorphism  $U\to V$.  Thus  there is  a natural restriction map from $H^0(X, (\phi^* M)^{\otimes k})$ to 
$$
H^0(U, (\phi^* M)^{\otimes k})=H^0(V, M^{\otimes k}),
$$ 
for each $k\geq 0$.
The following result (and its proof) is essentially a specialisation of  \cite[Prop.~2.1.3]{BT}
to the Cayley cubic. 

\begin{lemma}
The image of the restriction map from 
$H^0(X, (\phi^* M)^{\otimes k})$
to 
$H^0(V, M^{\otimes k})$
is equal
to 
$H^0_{\mathrm{bd}}(V, \cL^{\otimes k})$.
\end{lemma}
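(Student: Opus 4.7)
The plan is to exploit that the pulled-back sections $\phi^*s_0, \dots, \phi^*s_3$ of $\phi^*M$ have no common zero on $X$, since $\phi : X \to W \subset \PP^3$ is a morphism. This basepoint-freeness is what makes the pulled-back metric continuous on $X(\CC)$: near any point $w_0 \in X(\CC)$, pick $i$ with $\phi^*s_i(w_0) \neq 0$, so that $(\phi^*s_i)^k$ trivializes $(\phi^*M)^{\otimes k}$ locally; by \eqref{eq:arch}, any section $\tilde s$ then satisfies $\|\tilde s\|_{k,\infty} = |\tilde s/(\phi^*s_i)^k| \cdot (\sum_j |\phi^*s_j/\phi^*s_i|^2)^{-k/2}$, which is continuous on a neighborhood of $w_0$ because the ratios $\phi^*s_j/\phi^*s_i$ are regular there.

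For the forward inclusion, suppose $\tilde s \in H^0(X, (\phi^*M)^{\otimes k})$. Then $w \mapsto \|\tilde s(w)\|_{k,\infty}$ is continuous on the compact space $X(\CC)$ (compact since $X \subset \PP^2 \times \PP^1$ is projective), and hence bounded. Restricting to $U(\CC) \cong V(\CC)$ shows that $\tilde s|_V \in H^0_{\mathrm{bd}}(V, \cL^{\otimes k})$.

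For the reverse inclusion, take $s \in H^0_{\mathrm{bd}}(V, \cL^{\otimes k})$ and regard it, via $U \cong V$ and the density of $U$ in the irreducible variety $X$, as a rational section of $(\phi^*M)^{\otimes k}$ on $X$. Since $s$ is regular on $U$, any pole is supported on $X \setminus U = \{x_1=0\}$, which decomposes into two prime divisors, each a copy of $\PP^1$. Assume for contradiction that $s$ has a pole of order $m \geq 1$ along one such component $E$. Pick a generic point $w_0 \in E$ and an index $i$ with $\phi^*s_i(w_0) \neq 0$, and on a neighborhood $\Omega$ of $w_0$ write $s = h \cdot (\phi^*s_i)^k$, where $h$ is a rational function with a pole of order $m$ along $E \cap \Omega$. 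Then $\|s\|_{k,\infty} = |h| \cdot \|\phi^*s_i\|_\infty^k$ on $\Omega \cap U$; by the continuity above and the choice of $i$, the factor $\|\phi^*s_i\|_\infty^k$ tends to a strictly positive limit at $w_0$, whereas $|h| \to \infty$ as $w \to w_0$ inside $U$. Thus $\|s\|_{k,\infty}$ is unbounded near $w_0$, contradicting the hypothesis.

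The main obstacle is to handle the fact that $\|\cdot\|_{k,\infty}$ on $(\phi^*M)^{\otimes k}$ is defined only on pure powers and extended by the valuation property; the required bookkeeping is to choose, for each prime component of $X \setminus U$, an index $i$ for which $\phi^*s_i$ does not vanish generically, and then to change local trivializations accordingly. Once this is arranged, both inclusions reduce to standard facts: compactness of $X(\CC)$ for the forward direction, and the correspondence between boundedness of a rational section's norm and regularity along each Cartier divisor for the reverse.
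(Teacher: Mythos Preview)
Your proof is correct and is essentially equivalent in spirit to the paper's, but the reverse inclusion is argued along a slightly different route. The paper treats $s\in H^0_{\mathrm{bd}}(V,\cL^{\otimes k})$ analytically: from $\min_i |s/s_i^k|<K$ on $U(\CC)$ it applies Riemann's removable singularity theorem to each bounded holomorphic function $s/s_i^k$ on $U_i$, extends it across $X_i\setminus U_i$, glues the resulting local sections to a global holomorphic section of $(\phi^*M)^{\otimes k}$ on $X$, and then invokes GAGA to conclude algebraicity. You instead work algebraically from the start: since $X$ is smooth and $U$ is dense, $s$ is automatically a rational section of $(\phi^*M)^{\otimes k}$ with possible poles only along the two prime components of $X\setminus U$, and you rule these out by showing that a pole along a component $E$ forces $\|s\|_{k,\infty}$ to blow up near a general point of $E$. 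This avoids both the Riemann extension theorem and GAGA, at the mild cost of using the smoothness of $X$ (so that ``order of pole along $E$'' is well defined and the local factorisation $s=g\,t^{-m}(\phi^*s_i)^k$ with $g$ a unit is available at a general point). The paper's route generalises more directly to the setting of \cite[Prop.~2.1.3]{BT}; yours is more self-contained in this concrete case.
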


\begin{proof}
 The inclusion $\Image
 H^0(X, (\phi^* M)^{\otimes k}) \subset H_{\mathrm{bd}}^0(V, \cL^{\otimes k})$ 
 follows from the compactness of $X(\CC)$ as in 
 \cite[Prop.~2.1.3]{BT}.
Conversely, if we regard $s\in H_{\mathrm{bd}}^0(V, \cL^{\otimes k})$ as an element of 
$H^0(U, (\phi^* M)^{\otimes k})$
and let $s_i \in H_{\mathrm{bd}}^0(V, \cL)$ 
correspond to $t_i$, then there exists $K >0$ such that $\min_i |s/s_i^k|< K$ on $X(\CC)= \CC^2$. 
For 
 $0\leq i\leq 3$,  let $X_i$ be the open subset of $X$ where $\phi^{-1}(t_i)\neq 0$ 
and let $U_i$ be the open subset of $U\cap X_i$  where $|s/s_i^k|< K$. Then the bounded holomorphic function $s/s_i^k$ on $U_i$ extends uniquely to a bounded holomorphic function $h_i$ on $X_i$ by the first extension theorem of Riemann (see \cite[p.~38]{FG}). The local analytic sections $h_is_i^k$ on $X_i$  will glue to a global analytic section $\tilde s$ of $(\phi^*M)^{\otimes k}$ on $X$, which is algebraic by \cite[Appendix B.4]{hart}. Since $\tilde  s$  restricts to $s$ on $V$, we get that  $s\in  
\mathrm{Im}~
 H^0(X, (\phi^* M)^{\otimes k})$ and we are done.
\end{proof}

From this result  we immediately obtain the following result.

\begin{lemma}\label{lem:S2}
There is a natural isomorphism of graded rings between $A(V, \cL)$ and
$\bigoplus_{k\geq 0}  
H^0(X, (\phi^* M)^{\otimes k})$.
In particular, $A(V, \cL)$ is finitely generated.
\end{lemma}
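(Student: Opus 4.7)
The plan is to bootstrap the preceding lemma (which handles each graded piece separately) into a graded ring statement, and then invoke a standard finite generation result for section rings of ample line bundles.

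First, I would check that for every $k\geq 0$ the restriction map
$$\rho_k: H^0(X, (\phi^* M)^{\otimes k})\to H^0(V, M^{\otimes k})$$
is injective. By Theorem \ref{t:resolve}, $\phi$ restricts to an isomorphism $U\to V$, and $U$ is a non-empty open subset of the irreducible variety $X$; hence any global section of the line bundle $(\phi^* M)^{\otimes k}$ whose pullback vanishes on $V$ must vanish on the dense open $U$ and therefore on all of $X$. Combined with the previous lemma, which identifies $\mathrm{Im}\,\rho_k$ with $H^0_{\mathrm{bd}}(V,\cL^{\otimes k})$, this shows that $\rho_k$ is a vector space isomorphism onto its image.

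Next I would verify that the direct sum $\rho=\bigoplus_{k\geq 0}\rho_k$ is a homomorphism of graded rings. This is essentially formal: the multiplication on $\bigoplus_{k\geq 0} H^0(X, (\phi^* M)^{\otimes k})$ and on $A(V,\cL)$ is defined in both cases by tensor product of sections, and restriction to the open subset $V$ (via the isomorphism $U\cong V$) commutes with tensor product. Thus $\rho$ is a graded ring isomorphism.

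For the finite generation statement, I would argue as follows. By Remark \ref{rem:n}, the morphism $\phi: X\to W$ is finite, and $M=\cO_W(1)$ is very ample on $W$ since it is the restriction of the hyperplane bundle from $\PP^3$. A finite pullback of an ample line bundle is ample, so $\phi^* M$ is ample on the projective variety $X$. By a standard result of Zariski (see, e.g., \cite[Ex.~II.7.5]{hart}), the section ring $\bigoplus_{k\geq 0} H^0(X, (\phi^* M)^{\otimes k})$ of an ample line bundle on a projective variety is a finitely generated graded $\QQ$-algebra, and finite generation of $A(V,\cL)$ follows by transport of structure along $\rho$.

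The main obstacle has already been overcome in the previous lemma (via the Riemann extension theorem for bounded holomorphic functions); here the only subtlety is to record injectivity of restriction and to recognise $\phi^* M$ as an ample line bundle, both of which are immediate consequences of Theorem \ref{t:resolve} and Remark \ref{rem:n}.
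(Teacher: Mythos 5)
Your proposal is correct and fills in exactly the argument the paper treats as immediate: injectivity of the restriction maps $\rho_k$ (since $U$ is dense in the irreducible $X$), compatibility with the graded ring structure, and finite generation via ampleness of $\phi^* M$, which you correctly deduce from finiteness of $\phi$ and very ampleness of $M$ (the paper instead directly notes that $\phi^*M$ has bidegree $(1,1)$ on $X\subset\PP^2\times\PP^1$ and is very ample). The only minor quibble is the citation: \cite[Ex.~II.7.5]{hart} concerns powers of ample sheaves becoming very ample rather than finite generation of the section ring, which is closer to \cite[Ex.~II.5.14]{hart} (the reference the paper itself uses a few lines later when invoking projective normality of the cubic scroll), but the underlying fact is standard and the argument is sound.
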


Batyrev and Tschinkel call $\Proj A(V, \cL)$ the {\em $\cL$-primitive closure of $V$} (see \cite[Def.~2.1.6]{BT}). 
Apart from depending on $V$ and $L=\cO_V(1)$, it also   depends on the restriction of the complex norm 
$\|\cdot \|_\infty$ on $M$  to $L$. 
The line bundle $\phi^*M$ is very ample of bidegree $(1,1)$ on $X\subset \PP^2\times \PP^1$ and it embeds $X$ into $\PP^4$ as a cubic scroll. 
But it is well-known that a cubic scroll in $\PP^4$
is projectively normal 
(cf. \cite{O} and \cite[Ex.~II.5.14]{hart}), whence 
Lemma \ref{lem:S2} allows us to identify $\Proj A(V, \cL)$ with $X$. This  is  important for us, since  the conjectures about  $N(V; B)$ in \cite{BT} are formulated in terms of the geometry of $\Proj A(V, \cL)$.

It follows from the proof of Theorem \ref{t:main} in \S \ref{s:first} that the main term receives contributions
from infinitely many lines. Thus, for any Zariski locally closed subset $Z\subset V$ with $\dim Z< \dim V=2$ we have  
$$
\lim_{B\rightarrow \infty} \frac{N(Z;B)}{ N(V; B)} <1.
$$ 
This means that  $V$ is {\em weakly $\cL$-saturated}  (see \cite[Def.~3.2.2]{BT}).
Similar reasoning shows that $V$ contains no {\em strongly $\cL$-saturated} Zariski dense open subset (see 
\cite[Def.~3.2.3]{BT}).  

We
 recall the definition of the invariant 
$a_\cL(V)$
from \cite[Def.~2.2.4]{BT}. It is the infimum of all $t\in \QQ$ such that the class of $t[\phi^*L]+ [K_X]$ is in the effective cone of the N\'eron--Severi space $\NS(X)_\RR$. But $X$ is the blow-up of $\PP^2$ in a point and it is well known that $\NS(X)=\Pic (X)= \ZZ^2$ and that the restriction from $\Pic(\PP^2\times \PP^1)$ to 
$\Pic( X)$ is an isomorphism. Since the anticanonical sheaf of $\PP^2\times \PP^1$ is of bidegree $(3, 2)$ and $X\subset \PP^2\times\PP^1$ is given by a bilinear equation, the anticanonical sheaf on $X$ must have  bidegree $(2,1)$. Hence
$a_\cL(V)=2,$
since $[\phi^*L]$ has bidegree $(1,1)$.

We may now refer to \cite[\S 3.5]{BT} to obtain a conjecture for the asymptotic growth of
$N(V; B)$. Since $a_\cL(V)[\phi^*L]+ [K_X]$ has bidegree $(0,1)$ in $\Pic( X)$, it is represented by the class  $[D]$ of a fibre $D$ of the projection $f$ from $X\subset \PP^2\times \PP^1$ to $Y=\PP^1$. This means that $D$ is not {\em rigid}
(see \cite[Def.~2.3.1]{BT}) and so  $V$ is not {\em $\cL$-primitive} in the sense of \cite[Def.~2.3.4]{BT}. 
We therefore find ourselves in Case  1 of \cite[\S 3.5]{BT} and,  as expected, there is an $\cL$-primitive fibration given by the projection $f: X\to Y$. The fibres $X_y=f^{-1}(y)$ of $f$ are lines on $\PP^2$ and 
give the lines 
$V_y=\phi(U\cap X_y)$ on $W$, with defining equations
\begin{equation}\label{eq:defining}
 \lambda t_0-\mu t_1=\lambda\mu t_2-\lambda^2t_1-\mu^2 t_3=0,
\end{equation}
where $(y_1,y_2)=(\lambda,\mu)$ are the homogeneous coordinates representing the point $y$ on $Y=\PP^1$.
In fact the lines $V_y$ are parametrised by points $y$ on the open subset $Y_0=\AA^1$ of $Y$ where $y_2\neq 0$. 
Each rational point on $Y_0$ is represented by exactly 
two points $(\lambda,\mu)\in \ZZ_{\mathrm{prim}}^2$ with $\mu\neq 0$.

It is now easy to calculate the invariants  $a_\cL(V_y)$
and $\beta_\cL(V_y)$ for $V_y$. These are given by $a_\cL(V_y) =2= a_\cL(V)$  and $\beta_\cL(V_y)=\rank \Pic( X_y)=1$. The conjecture of Batyrev and Tschinkel therefore  predicts that 
\begin{equation}\label{eq:predict}
N(V;B)= c_\cL(V)B^2+o(B^2),
\end{equation}
as $B\rightarrow \infty$, where 
 $c_\cL(V)$ is a sum of constants $\sum_{y\in Y_0(\QQ)} c_\cL(V_y)$.
The constant  $c_\cL(V_y)$ is given by
$$
c_\cL(V_y) = \frac{\gamma_\cL(V_y)\delta_\cL(V_y)\tau_\cL(V_y)}{ a_\cL(V_y) (\beta_\cL(V_y)-1)!}=\frac{\gamma_\cL(V_y)\tau_\cL(V_y)}{2},
$$
since $\delta_\cL(V_y)= \# H^1(\Gal(\bar{\QQ}/ \QQ), \Pic(\bar{X_y}) )=1$.
The $\gamma$-invariant is the same as Peyre's $\alpha$-invariant that was introduced in \cite{Pey95}, since
 $\rank \Pic( \bar{X_y}) =1$ 
(for the comparison see \cite[p.~335]{Pey03}). According to \cite[p.~150]{Pey95}, therefore, we  have  $\gamma_\cL(V_y)= 
\alpha (X_y)=\tfrac{1}{2}$.

In order to compute $\tau_\cL(V_y)$, we 
 make use of the fact that
 $\tau_\cL(V_y)$
coincides with 
the Tamagawa constant 
 $\tau_\cL(X_y)$, 
defined by  Peyre  \cite[p.~119]{Pey95}.
To define the latter, let $\phi_y:X_y\to W$ be the restriction of $\phi:X\to W$ to $X_y$ and let 
$\|\cdot \|_{k,v}'$
be the pullback norm of $\|\cdot \|_{k,v}$ on $\phi_y^*(M^{\otimes k})$ (cf. \cite[p.~100]{salberger}).  Furthermore, 
in the light of  \eqref{eq:defining}, 
we let $\tau_0,\tau_1$ be homogeneous coordinates for $X_y=\PP^1$ such that $\phi_y(\tau_0,\tau_1)=(t_0,t_1,t_2,t_3)$, with $y=(\lambda,\mu)\in Y_0$ and 
(as in the proof of  Lemma~\ref{lem:lines})
$$
t_0=\mu^2 \tau_0,\quad 
t_1=\lambda \mu \tau_0,\quad
t_2=\lambda^2\tau_0+\mu \tau_1,\quad
t_3=\lambda \tau_1.
$$
This expresses $t_i$, for each $0\leq i\leq 3$, as a linear function $L_i(\tau_0,\tau_1)$, say.
Let  $(\sigma_0,\sigma_1)$ be the global sections of $\phi_y^*(M)$ corresponding to the homogeneous coordinates $\tau_0,\tau_1$ for $X_y$. We then have 
$$
\|\sigma(x_p)\|_{2,p}'=\min\left\{  
|(\sigma/\sigma_0^2)(x_p)|_p,  |(\sigma/\sigma_1^2)(x_p)|_p
\right\}
$$
for a local section $\sigma$ of $\phi_y^*(M^{\otimes 2})=\cO_{\PP^1}(2)$ at a point $x_p\in X_y(\QQ_p)$, while
$$
\|\sigma(x_\infty)\|_{2,\infty}'=
\|\sigma(x_\infty)\|_{1,\infty}'=
\left(
\sum_{0\leq i\leq 3} \left( L_i(\sigma_0,\sigma_1)^2/\sigma\right)(x_\infty)
\right)^{-1}
$$
for a local section $\sigma$ of $\phi_y^*(M^{\otimes 2})$ 
with $\sigma(x_\infty)\neq 0$ at 
 a point $x_\infty\in X_y(\RR)$.
Equipped  with these facts we are now ready to calculate 
the value of $\tau_\cL(V_y)$.

\begin{lemma}\label{lem:density}
For $y\in Y_0(\QQ)$ we have 
$$
\tau_\cL(V_y)
=\frac{2\pi}{\zeta(2)\sqrt{f(\lambda,\mu)}},
$$
where $f(\lambda,\mu)$ is as in the statement of Theorem \ref{t:main}.
\end{lemma}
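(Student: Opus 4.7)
The plan is to evaluate $\tau_\cL(V_y) = \tau_\cL(X_y)$ via Peyre's adelic product formula applied to $X_y \cong \PP^1_\QQ$ with its metrized anticanonical sheaf $\phi_y^*(M^{\otimes 2})$. Since $\Pic(\bar{X_y}) \cong \ZZ$ with trivial Galois action, $L(s,\Pic(\bar{X_y})) = \zeta(s)$, the convergence factor at $\infty$ is $\lim_{s\to 1}(s-1)\zeta(s) = 1$, and the factor at each finite prime $p$ is $(1-p^{-1})$. Hence
\[
\tau_\cL(V_y) = \Bigl(\int_{X_y(\RR)} \omega_\infty\Bigr) \cdot \prod_p (1-p^{-1}) \int_{X_y(\QQ_p)} \omega_p.
\]

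The non-archimedean places can be treated uniformly in $(\lambda,\mu)$. The formula for $\|\cdot\|'_{2,p}$ given in the excerpt involves only the two generating sections $\sigma_0^2,\sigma_1^2$ of $H^0(\PP^1,\cO(2))$, so the pulled-back $p$-adic norm on $\phi_y^*(M^{\otimes 2})$ coincides with the standard $p$-adic norm on $\cO_{\PP^1}(2)$, independently of the choice of $\lambda,\mu$. A direct calculation in the chart $u = \tau_1/\tau_0 \in \QQ_p$ gives $\omega_p = \min(1,|u|_p^{-2})\,du$ and $\int_{X_y(\QQ_p)} \omega_p = 1 + p^{-1}$, so the normalised local factor is $(1-p^{-1})(1+p^{-1}) = 1 - p^{-2}$ and the infinite product equals $\zeta(2)^{-1}$.

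For the archimedean factor I would work in the affine chart $\tau_0 = 1$, $u = \tau_1 \in \RR$. Under the canonical isomorphism $\omega_{\PP^1}^{-1} \cong \cO_{\PP^1}(2)$ sending $\partial_u \mapsto \sigma_0^2$, the Tamagawa measure becomes $\omega_\infty = \|\sigma_0^2\|'_{2,\infty}\, du$, and substituting the archimedean formula from the excerpt with $\sigma = \sigma_0^2$ yields
\[
\omega_\infty = \frac{du}{Q(u)}, \qquad Q(u) := \sum_{i=0}^{3} L_i(1,u)^2,
\]
where the linear forms $L_i(\tau_0,\tau_1)$ are read off from the parametrization $(t_0,t_1,t_2,t_3) = (\mu^2\tau_0,\,\lambda\mu\tau_0,\,\lambda^2\tau_0+\mu\tau_1,\,\lambda\tau_1)$. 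The elementary identity $\int_\RR du/(Au^2+Bu+C) = 2\pi/\sqrt{4AC-B^2}$ then reduces the evaluation to a polynomial computation.

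The main substantive step is the algebraic identity $4AC - B^2 = 4\, f(\lambda,\mu)$, obtained by expanding $(\lambda^2+\mu^2)(\lambda^4+\lambda^2\mu^2+\mu^4) - \lambda^4\mu^2$ and collecting terms. This is elementary but is the only place where the specific shape of the sextic $f$ enters; it is also the most likely source of book-keeping errors, particularly in tracking the various factors of $2$ coming from the integral formula, the power norm, and the identification $\partial_u \mapsto \sigma_0^2$. Once this identity is verified, multiplying the archimedean integral by the infinite product $\zeta(2)^{-1}$ from the non-archimedean step yields the value of $\tau_\cL(V_y)$ asserted in the lemma.
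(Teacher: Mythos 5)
Your proposal is correct and follows essentially the same route as the paper: write $\tau_\cL(V_y)$ as Peyre's adelic product with convergence factors $(1-1/p)$, observe that the finite places contribute $\zeta(2)^{-1}$, and reduce the archimedean volume to $\int_\RR \d u/Q(u) = 2\pi/\sqrt{4AC-B^2}$. The only cosmetic difference is that you evaluate $\int_{X_y(\QQ_p)}\omega_p = 1+p^{-1}$ by a direct chart computation, whereas the paper cites Peyre's Lemme~2.2.1 to get $\#X_y(\FF_p)/p$; the two are equivalent here.
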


\begin{proof}
The $v$-adic norms 
$\|\cdot\|_{2,v}'$ on the anticanonical sheaf $\cO_{\PP^1}(2)$ give rise to measures $\omega_v$ on $X_y(\QQ_v)$ 
(see  \cite[p.~112]{Pey95}) and a product measure 
 $\omega_{\A_\QQ}$
on the ad\`eles $X_y(\A_\QQ)= \prod_v X_y(\QQ_v)$.
The definition of $\omega_{\A_\QQ}$ requires the  convergence factors 
$L_p(1, \Pic (\bar{X_y}))$, which in this case are equal to $(p-1)/p$
for all $p$.  Hence 
$$
\tau_\cL(V_y)= 
 \omega_{\A_\QQ}(X_y(\AA_\QQ))
 =\omega_\infty (X_y(\RR))
\prod_p \left(1-\frac{1}{p}\right) 
\omega_p (X_y(\QQ_p)).
$$
The proof  of \cite[Lemme 2.2.1]{Pey95} shows that
$$
\omega_p(X_y(\QQ_p))=\frac{\#X_y(\FF_p)}{p}=\frac{p+1}{p}
$$ 
for all primes $p$, whence
$$
\tau_\cL(V_y)
=\frac{\omega_\infty (X_y(\RR))}{\zeta(2)}.
$$
It remains to compute the volume $\omega_\infty (X_y(\RR))$.

According to the  definition of measure $\omega_\infty$ in \cite[p.~112]{Pey95},
we need to compute the volume for the real measure on 
$X_y(\RR)=\PP^1(\RR)$ associated to the real norm $\|\cdot\|_{2,\infty}'$ on $\cO_{\PP^1}(2)$.
This measure may be viewed 
as the Riemannian density  
(see \cite[p.~136]{GHL}, for example) associated to  the  Riemannian metric on 
$X_y(\RR)$ that one obtains by pulling back the standard 
Riemannian metric on $\PP^3(\RR)= S^4/\ZZ^2$ along the embedding 
$\psi_y:X_y(\RR)\to \PP^3(\RR)$, given by 
$\phi_y$ and $W(\RR)\subset \PP^3(\RR)$.

If we let $u$ be the affine coordinate $\sigma_1/\sigma_0=\tau_1/\tau_0$ for $X_y$ and 
$$
Q(u)=\sum_{0\leq i\leq 3} L_i(1,u)^2=(\lambda^2+\mu^2)u^2+2\lambda^2\mu u+\lambda^4+\lambda^2\mu^2+\mu^4,
$$
then \cite[Eq.~(2.2.1)]{Pey95} implies that 
$\omega_\infty$ is the measure $\d u/Q(u)$ on the open subset of $X_y$ where $\tau_0\neq 0$.
It therefore follows that 
$$
\omega_\infty(X_y(\RR))=\int_{-\infty}^\infty \frac{\d u}{Q(u)}=\frac{2}{\sqrt{\disc(Q)}}\int_{-\infty}^\infty \frac{\d u}{u^2+1}=
\frac{2\pi}{\sqrt{f(\lambda,\mu)}},
$$
as required to complete the proof of the lemma. 
\end{proof}

This completes our calculation of the constant $c_{\cL}(V)$ in \eqref{eq:predict}. Combining Lemma 
\ref{lem:density} with the preceding discussion we conclude that 
$$
c_{\cL}(V)
=\frac{\pi}{4\zeta(2)} \sum_{(\lambda,\mu)\in Y_0(\QQ)} \frac{1}{\sqrt{f(\lambda,\mu)}}
=\frac{\pi}{2\zeta(2)} \sum_{\substack{(\lambda,\mu)\in \ZZ_{\mathrm{prim}}^2\\ \mu\neq 0}} \frac{1}{\sqrt{f(\lambda,\mu)}},
$$
which aligns perfectly with the statement of Theorem \ref{t:main}.

\section{First approach: using the lines}\label{s:first}

The 
Fano variety of lines
 $F_1(W)\subset \mathbb{G}(1,3)$ on $W$  is the union of an isolated point and a twisted cubic. The former component corresponds to the double line $\{t_0=t_1=0\}$ and the latter corresponds to the family of lines
$$
V_{y}=\left\{ \lambda t_0-\mu t_1=\lambda\mu t_2-\lambda^2t_1-\mu^2 t_3=0\right\},
$$ 
for $y=(\lambda,\mu)\in \PP^1$, that we met in \eqref{eq:defining}.
As previously, let $Y_0$
be the open subset of $\PP^1$ where $\mu\neq 0$. 
Every point of $V(\QQ)$ lies on precisely one line $V_y$, for $y\in Y_0(\QQ)$, so that 
$$
N(V;B)=
\sum_{y\in Y_0(\QQ)} 
N(V_y;B).
$$
We have 
$$
N(V_y;B)
=\frac{1}{2}
\#\left\{\t\in \ZZ_{\mathrm{prim}}^4\cap V_y : (t_0,t_1)\neq (0,0), ~\|\t\|\leq B\right\},
$$
where $\|\t\|=\sqrt{t_0^2+\dots+t_3^2}$.
The next result is concerned with an explicit parameterisation of the 
lines $V_y$.

\begin{lemma}\label{lem:lines}
For $\mu\neq 0$ we have 
$$
N(V_y;B)=
\frac{1}{2}
\# \left\{ 
(\tau_0,\tau_1)\in \ZZ_{\mathrm{prim}}^2:
 \begin{array}{l}
\tau_0\neq 0\\
\|(\mu^2\tau_0,\lambda\mu\tau_0, 
\lambda^2\tau_0+\mu\tau_1, \lambda \tau_1)\|\leq B
\end{array}{}
\right\}.
$$
\end{lemma}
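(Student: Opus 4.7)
The plan is to exhibit a bijection, compatible with the $\pm$-action, between primitive vectors $\t \in \ZZ_{\mathrm{prim}}^4 \cap V_y$ with $(t_0,t_1)\neq (0,0)$ and primitive pairs $(\tau_0,\tau_1)\in \ZZ_{\mathrm{prim}}^2$ with $\tau_0\neq 0$, implemented by the formulae
$$
t_0=\mu^2 \tau_0,\quad t_1=\lambda \mu \tau_0,\quad t_2=\lambda^2 \tau_0+\mu \tau_1,\quad t_3=\lambda \tau_1.
$$
Throughout, I will use that $(\lambda,\mu)\in \ZZ_{\mathrm{prim}}^2$ with $\mu\neq 0$. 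The first step is a direct substitution into the defining equations \eqref{eq:defining} of $V_y$ to check that the image of this map indeed lies on $V_y$.

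Next I will verify that the map takes primitive inputs to primitive outputs. Suppose a prime $p$ divides all four coordinates above. From $p\mid \mu^2\tau_0$ either $p\mid \tau_0$ or $p\mid \mu$; in the first case $p\mid \lambda^2\tau_0+\mu\tau_1$ forces $p\mid \mu\tau_1$, and combining with $p\mid \lambda\tau_1$ and $\gcd(\lambda,\mu)=1$ yields $p\mid \tau_1$, contradicting $\gcd(\tau_0,\tau_1)=1$; in the second case $\gcd(\lambda,\mu)=1$ gives $p\nmid \lambda$, whence $p\mid \lambda^2\tau_0$ forces $p\mid \tau_0$, and one concludes as before. The condition $\tau_0\neq 0$ is equivalent to $t_0=\mu^2\tau_0\neq 0$, so $(t_0,t_1)\neq (0,0)$.

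For the converse (the main bookkeeping step), given a primitive $\t\in V_y$ with $(t_0,t_1)\neq (0,0)$, I will use the first defining equation $\lambda t_0=\mu t_1$ and $\gcd(\lambda,\mu)=1$ to write $t_0=\mu s$, $t_1=\lambda s$ for a unique $s\in \ZZ$; substituting into $\lambda\mu t_2-\lambda^2 t_1-\mu^2 t_3=0$ then yields $\lambda\mu t_2=\lambda^3 s+\mu^2 t_3$. When $\lambda\neq 0$, this gives $\lambda\mid \mu^2 t_3$ hence $\lambda\mid t_3$; writing $t_3=\lambda\tau_1$ and dividing through by $\lambda$ gives $\mu t_2=\lambda^2 s+\mu^2\tau_1$, so $\mu\mid s$; writing $s=\mu\tau_0$ then produces the desired representation. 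The case $\lambda=0$ forces $\mu=\pm 1$, $t_1=t_3=0$, and the representation is immediate by taking $\tau_0=\mu t_0$ and $\tau_1=\mu t_2$. Uniqueness of $(\tau_0,\tau_1)$ is clear since $\tau_0$ is determined by $t_0=\mu^2\tau_0$ with $\mu\neq 0$, and $\tau_1$ is then determined by either $t_3$ (if $\lambda\neq 0$) or $t_2$ (if $\lambda=0$). Finally, the map $\t\mapsto (\tau_0,\tau_1)$ is compatible with sign change, so the factors of $\tfrac12$ on both sides of the claimed identity match, completing the proof. The only genuine nuisance is separating the $\lambda=0$ case, but this is entirely mechanical.
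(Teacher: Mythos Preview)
Your proof is correct and follows essentially the same route as the paper: both separate off the case $\lambda=0$, invert the parametrisation via the divisibility deductions $\lambda\mid t_3$ and $\mu\mid s$, and then check the primitivity correspondence by elementary gcd/prime arguments. Two minor points: in the $\lambda=0$ case the correct inverse is $\tau_0=t_0$ (since $\mu^2=1$), not $\tau_0=\mu t_0$; and you should state explicitly that the constructed $(\tau_0,\tau_1)$ is primitive whenever $\t$ is, which is immediate because the parametrisation is $\ZZ$-linear in $(\tau_0,\tau_1)$.
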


\begin{proof}
Suppose first that 
 $\lambda=0$. In this case $V_y$ is the line $t_1=t_3=0$ and the statement of the lemma is clear. 
For the remaining values of $\lambda,\mu$ we deduce from 
the first equation defining $V_{y}$  that 
$$
t_0=h\mu, \quad t_1=h\lambda,
$$
for a non-zero integer $h$, 
since $\gcd(\lambda,\mu)=1$.
Making this substitution into the second equation defining $V_y$, we obtain
\begin{equation}\label{eq:*}
\lambda\mu t_2-\mu^2t_3-h\lambda^3=0.
\end{equation}
It follows from this that $\mu\mid h$ and  $\lambda\mid t_3$.
Thus we may make the change of variables
$$
h=\mu\tau_0, \quad t_3=\lambda \tau_1, \quad t_2=\tau_2,
$$
for $\tau_0,\tau_1\in \ZZ$ such that $\tau_0\neq 0$.
On substituting these into \eqref{eq:*} and dividing through by $\lambda\mu$, this leads to 
$\tau_2=\lambda^2\tau_0+\mu\tau_1$.
We therefore arrive at the parameterisation  in the statement of the lemma.

Since $\gcd(\lambda,\mu)=1$,  in order to complete the proof of the lemma, 
it will suffice to show that  $\t$ is primitive if and only if 
$\gcd(\tau_0,\tau_1)=1$.  But  $\t$ is primitive if and only if 
$\gcd(\mu\tau_0,\tau_2,\lambda\tau_1)=1$, i.e.\ if and only if 
$\delta_1=\delta_2=\delta_3=1$, where
$$
\delta_1=\gcd(\tau_0,\tau_2,\lambda), \quad 
\delta_2=\gcd(\tau_0,\tau_2,\tau_1), \quad 
\delta_3=\gcd(\mu,\tau_2,\tau_1).
$$
Clearly  $\delta_2=\gcd(\tau_0,\tau_1).$
It will therefore suffice to show that $\delta_1=\delta_3=1$ when $\delta_2=1$. But 
\begin{align*}
\delta_1&\mid \gcd(\tau_0,\tau_2,\lambda,\mu\tau_1)=\gcd(\delta_2,\lambda),\\
\delta_3&\mid \gcd(\mu,\tau_2,\tau_1,\lambda^2\tau_0)
=\gcd(\delta_2,\mu),
\end{align*}
from which the claim follows. 
\end{proof}

It is clear that $N(V_y;B)=0$ unless  $|\lambda|,|\mu|\leq \sqrt{B}$.
The region in this counting function is an ellipsoid  which is contained in the region
$$
 \tau_0\ll 
\frac{B}{
\lambda^2+\mu^2}, \quad 
\tau_1\ll 
\frac{B}{\max\{|\lambda|,|\mu|\}}.
$$
Let $N^*(V_y;B)$  be the cardinality in Lemma \ref{lem:lines}, 
in which 
 the coprimality condition $\gcd(\tau_0,\tau_1)=1$ is dropped.
Then 
\begin{align*}
N(V_y;B)
=\frac{1}{2}
\sum_{k\ll B/(\lambda^2+\mu^2)}
\mu(k) N^*(V_y;B/k).
\end{align*}
We may 
 approximate 
$N^*(V_y;B)$  by the volume of the region to within  an error of 
$O(B/\max\{|\lambda|, |\mu|\}+1)$.
This gives
\begin{align*}
N(V_y;B)
&=
\frac{c_{y} B^2}{2}
\sum_{k\ll B/(\lambda^2+\mu^2)}
\frac{\mu(k)}{k^2} 
 +
 O\left(\frac{B}{\max\{|\lambda|, |\mu|\}}+1\right),
\end{align*}
where
$c_{y}$ is the volume of the region
$$
\{(\xi,\eta)\in \RR^2:
(\lambda^2+\mu^2)\xi^2+2\lambda^2\mu \xi\eta+(\lambda^4+\lambda^2\mu^2 +\mu^4)\eta^2\leq 1\}.
$$
The associated  discriminant  is 
$$
4\left\{(\lambda^2+\mu^2)(\lambda^4+\lambda^2\mu^2 +\mu^4)-
(\lambda^2\mu)^2\right\}=4f(\lambda,\mu),
$$
in the notation of Theorem \ref{t:main}, whence 
$
c_{y}=\pi/\sqrt{f(\lambda,\mu)}.
$
Extending the sum over $k$ to infinity we are therefore led to 
an expression for  $N(V_y;B)$, with error term
$O(B/\max\{|\lambda|, |\mu|\}+1)$ and a main term 
equal to 
$$
\frac{\pi^2 B^2}{2\zeta(2)\sqrt{f(\lambda,\mu)}}.
$$
Once summed over $|\lambda|,|\mu|\leq \sqrt{B}$ this error term makes the satisfactory overall contribution $O(B^{3/2}\log B)$. 
Finally, 
 we extend the summations over $\lambda,\mu$ to infinity
 to arrive finally at the statement of  Theorem \ref{t:main}.

\section{Second approach: using Poisson summation}\label{s:second}

In this section we will study the counting function $N(V;B)$ using the methods 
 of Chambert-Loir and Tschinkel \cite[\S 3]{CL-T}.
Let $G$ denote the commutative algebraic group given by the equation 
$
xy-y^3-z=0, 
$
with identity $(0,0,0)$ and addition rule given by 
$$
(x,y)+ (x',y')=(x+x',y+y'+3xx').
$$
This group is isomorphic to $\mathbb{G}_a^2$ and we will view it as such.
There is a $G$-action 
$G\times W \to W$
 given by 
$$
(x,y)\cdot (\t)\mapsto (t_0,t_1+yt_0,t_2+xt_0+3yt_1,t_3+xt_1+yt_2+(xy-y^3)t_0).
$$
One can check that $W$ is an equivariant compactification of $G$.

If we put $(x, y, z)=(t_1/t_0, t_2/t_0, t_3/t_0)$,  then we may regard $V$  as the affine cubic in $\AA^3$ given by the equation  $xy- y^3 - z$. 
We identify any point $(x,y,z)\in G$
with 
a point $(1,x,y,z)\in V$. 
We are then interested in the analytic properties of the height zeta function 
$$
Z(s)=\sum_{(x,y,z)\in G(\QQ)
} H(x,y,z)^{-s}=\sum_{
P=(x,y)\in \mathbb{G}_a^2(\QQ)}
H(P)^{-s},
$$
for $\Re(s)\gg 1$, where  
for $P=(x,y)\in \mathbb{G}_a(\QQ)$, we have
$$
H(P)=H_\infty(x,y)\prod_p H_p(x,y),
$$ 
with
$$
H_v(x,y)=\begin{cases}
\sqrt{1+x^2+ y^2+ (xy-y^3)^2}, & \mbox{if $v=\infty$,}\\
\max\{1, |x|_p ,
 |y|_p, |xy-y^3|_p\},
& \mbox{if $v=p$.}
\end{cases}
$$
Define the local characters $\psi_v:\mathbb{G}_a(\QQ_v)\to \CC^*$ via
$$
\psi_v(x_v)=\begin{cases}
e(-x_v), &\mbox{if $v=\infty$,}\\
e(x_v), &\mbox{if $v=p$}.
\end{cases}
$$
The product of these gives a global 
character
 $\psi: \mathbb{G}_a(\AA_\QQ)\to \CC^*$.
 
 Let $\mu_p$ be the  Haar measure on  $\QQ_p^2$ normalised so that  $\mu(\ZZ_p^2)=1$.
Let $\mu_\infty$ denote the ordinary Lebesgue measure on $\RR$. Then it follows from the Poisson summation formula (see Thm.~2.5 and Prop.~2.6 of \cite{CL-T}) that 
$$
Z(s)=\sum_{\a=(a_1,a_2)\in \mathbb{G}_a^2(\ZZ)} \widehat H(s;\a),
$$
where
\begin{align*}
\widehat H(s;\a)
&=\prod_v \int_{(x,y)\in \mathbb{G}_a^2(\QQ_v)} 
\frac{\psi_v(a_1x+a_2y)}{
H_v(x,y)^{s}} \d \mu_v(x,y)=\prod_v
\widehat H_v(s;\a),
\end{align*}
say.  We will use the notation $\d x \d y$ for $\d \mu_v(x,y)$.
As remarked in the introduction we  will find that the main contribution  comes from the (not all  trivial)  characters corresponding to 
$a_1=0$.

\subsection{Calculation of $\widehat H_\infty(s;\a)$}

We have 
\begin{align*}
\widehat H_\infty(s;\a)
&=\int_{(x,y)\in \RR^2} \frac{e(-a_1x-a_2y)\d x \d y}{
(1+x^2+y^2+(y^3-xy)^2)^{s/2}}.
\end{align*}
This is absolutely convergent for $\Re (s)\geq 2$. 
In fact, for $\Re (s)\geq 2$, repeated integration by parts shows that 
$
\widehat H_\infty(s;\a) \ll_{\sigma,N}	 (1+|\a|)^{-N},
$
for any $N\in \NN$
When $a_1=0$ and $s=2$ we may carry out the integration over $x$ to conclude that 
\begin{align*}
\widehat H_\infty(2;0,a_2)
&= 2\pi \int_{0}^\infty  \frac{\cos(2\pi a_2y) \d y}{
\sqrt{y^6+y^4+2y^2+1}}.
\end{align*}

\subsection{Calculation of $\widehat H_p(s;\a)$ with $a_1\neq 0$}

Suppose  that $a_1\neq 0$.
We are interested in discovering precisely when the  Euler product 
$\widehat H (s;\a)=\prod_p\widehat H_p(s;\a)$
has a pole at $s=2$.
Note that   $H_p(x,y)=1$ if and only if  $(x,y)$ belongs to $ \ZZ_p^2$. Hence
we have
\begin{align*}
\widehat H_p(s;\a)&=\int_{(x,y)\in \QQ_p^2} H_p(x,y)^{-s}
e( a_1x+a_2 y)
\d x\d y\\
&=1+\sum_{j\geq 1} p^{-js}\int_{ \{(x,y)\in \QQ_p^2\,:\, 
\max\{  |x|_p,|y|_p, |xy-y^3|_p\}=p^j \}} 
\hspace{-0.8cm}
e( a_1x+a_2 y)\d x \d y.
\end{align*}
When $x=p^{-j_1}x'$ and  $y=p^{-j_2}y'$ with $x',y'\in U_p$, it is easy to see that
$$
|xy-y^3|_p
=
\begin{cases}
p^{j_1+j_2}, & \mbox{if $j_1>2j_2$},\\
p^{3j_2} & \mbox{if $j_1<2j_2$},\\
p^{3j_2}| x'-y'^2|_p & \mbox{if $ j_1=2j_2$}.
\end{cases}
$$
We let  $S_1(s;\a)$, $S_2(s;\a)$ and $S_3(s;\a)$ denote the contribution from these different cases 
to the sum $\widehat H_p(s;\a)$.

In order to proceed it  will be useful to note that 
\begin{align*}
\int_{U_p} e\left(\frac{cx}{p^j}\right) \d x 
&=
\int_{\ZZ_p} e\left(\frac{cx}{p^j}\right) \d x -
\frac{1}{p}\int_{\ZZ_p} e\left(\frac{cx}{p^{j-1}}\right) \d x \\
&=
\begin{cases}
0, &\mbox{if $j-v_p(c)\geq 2$,}\\
-1/p, &\mbox{if $j-v_p(c)= 1$,}\\
1-1/p, &\mbox{if $j-v_p(c)\leq 0$,}
\end{cases}
\end{align*}
for any $c,j\in \ZZ$.
Note, furthermore,  that we always have the trivial bound
\begin{equation}\label{eq:triv}
|\widehat H_p(s;\a)|\leq 1+O\left(\frac{1}{p^{\sigma-1}}\right),
\end{equation}
which comes from our calculation of $\widehat H_p(s;\0)$.

If $p\mid a_1$ we use \eqref{eq:triv}. Otherwise, supposing that 
$p\nmid a_1$, 
it suffices to calculate
\begin{equation}\label{eq:S1-a}
S_1(s;\a)=\sum_{\substack{j_1\geq 1\\j_1>2j_2\\j_2\geq 0}} p^{(j_1+j_2)(1-s)}I(j_1,j_2)+
\sum_{\substack{j_1\geq 1\\ j_2< 0}} p^{-j_1s+j_1+j_2 }I(j_1,j_2),
\end{equation}
where
\begin{align*}
I(j_1,j_2)
&=\int_{U_p^2} e\left(\frac{a_1x}{p^{j_1}}+
\frac{a_2y}{p^{j_2}} \right)\d x\d y
=
\begin{cases}
0, &\mbox{if $j_1\geq 2$,}\\
-1/p(1-1/p), &\mbox{if $j_1=1, ~j_2\leq 0$.}
\end{cases}
\end{align*}
A simple computation now reveals that 
$S_1(s;\a)=-p^{-s}$. Hence we conclude that 
$\widehat H (s;\a)$ is absolutely convergent and bounded by $O(|\a|^\ve)$ for any $\ve>0$, provided that 
$\Re(s)>3/2$ and $a_1\neq 0$.

\subsection{Calculation of $\widehat H_p(s;0,a_2)$}

Next we suppose that $\a=(0,a_2)$.
It will be convenient to set $\alpha=v_p(a_2)\geq 0$, with the convention that $\alpha=\infty$ if $a_2=0$.
In this case it follows from \eqref{eq:S1-a} that
\begin{align*}
S_1(s;0,a_2)
=~&-\sum_{\substack{j_1\geq 2\alpha+3}} p^{(j_1+1-\alpha)(1-s)-1}(1-1/p)\\
&+\sum_{\substack{j_1>2j_2\\ 0\leq j_2\leq \alpha}} p^{(j_1+j_2)(1-s)}(1-1/p)^2
+
\frac{p^{1-s}(1-1/p) }{p(1-p^{1-s}) }\\
=~&
\frac{p^{1-s}(1-1/p)(1-p^{3(\alpha+1)(1-s)})
(1-p^{2-3s})
}{(1-p^{1-s})(1-p^{3(1-s)})},
\end{align*}
since now
\begin{align*}
I(j_1,j_2)
&=
\begin{cases}
0, &\mbox{if $j_2\geq 2+\alpha$,}\\
-1/p(1-1/p), &\mbox{if $j_2=1+\alpha$,}\\
(1-1/p)^2, &\mbox{if $j_2\leq \alpha$.}
\end{cases}
\end{align*}
In particular we have 
$$
S_1(2;0,a_2)=\frac{(1-p^{-3\alpha-3})(1-p^{-4})}{p(1-p^{-3})}.
$$
Next 
\begin{align*}
S_2(s;0,a_2)
=~&
-p^{-3(1+\alpha)(s-1)-2}+
\sum_{\substack{1\leq j_2\leq \alpha}} p^{-3j_2(s-1)-1} (1-1/p)\\
=~&
-p^{-3(1+\alpha)(s-1)-2}+\frac{p^{2-3s}(1-1/p)(1-p^{3\alpha(1-s)})}{1-p^{3(1-s)}}\\
=~&
-p^{-5-3\alpha}+\frac{(1-1/p)(1-p^{-3\alpha})}{p^4(1-p^{-3})}.
\end{align*}

To calculate $S_3(s;0,a_2)$, 
it will be convenient to put
$$
\delta_j=
\begin{cases}
0, &\mbox{if $j=1+\alpha$,}\\
1, &\mbox{if $j\leq \alpha$}.
\end{cases}
$$
Let 
 $T(h)$ denote the set of 
$(x,y)\in U_p^2$ such that 
$|x-y^2|_p=p^{-h}$. Then 
$$
\int_{T(h)}e\left(\frac{a_2 y }{p^{j_2}}\right) \d x \d y
=
\begin{cases}
0& \mbox{if  $j_2\geq 2+\alpha$,}\\
(\delta_{j_2}-1/p)(1-1/p) p^{-h}& \mbox{if  $h\geq 1$, $j_2\leq 1+\alpha$,}\\
(\delta_{j_2}-1/p)(1-2/p)& \mbox{if $h=0$, $j_2\leq 1+\alpha$.}
 \end{cases}
$$
Writing $S_3(s;\a)=A_\a(s)+B_\a(s)+C_\a(s)$, we see that
\begin{align*}
A_\a(s)=~&
\sum_{\substack{h\geq j_2\\
1\leq j_2\leq 1+\alpha}}
 p^{-2(1+\alpha) s+3(1+\alpha)-h}(\delta_{j_2}-1/p)(1-1/p) \\
=~&
-p^{-2(1+\alpha)(s-1)-1}
+\sum_{1\leq j\leq \alpha} p^{-2j_2(s-1)}(1-1/p)\\
=~&-p^{-2(1+\alpha)(s-1)-1}+\frac{p^{2(1-s)} (1-1/p)(1-p^{2\alpha(1-s)})}{1-p^{2(1-s)}
},
\end{align*}
whence 
$$
A_\a(2)=-p^{-3-2\alpha}+\frac{(1-1/p)(1-p^{-2\alpha})}{p^2(1-p^{-2})}.
$$
Next 
\begin{align*}
B_{\a}(s) =~&
\sum_{
\substack{
1\leq h\leq j_2-1\\
1\leq j_2\leq \alpha+1
}}
p^{-(3j_2-h)(s-1)}(\delta_{j_2}-1/p)(1-1/p)\\
=~&-\frac{p^{(3+2\alpha)(1-s)-1}(1-p^{\alpha(1-s)})(1-1/p)}{1-p^{1-s}}\\
~&+
\sum_{
1\leq j_2\leq \alpha}
\frac{p^{3j_2(1-s)}(1-1/p)^2(p^{(j_2-1)(s-1)}-1)}{1-p^{1-s}}
\\
=~&
-\frac{p^{(3+2\alpha)(1-s)-1}(1-p^{\alpha(1-s)})(1-1/p)}{1-p^{1-s}}\\
&+
\frac{p^{3(1-s)}(1-1/p)^2}{1-p^{1-s}}\left(
\frac{ 1-p^{2\alpha(1-s)}}{1-p^{2(1-s)}}-
\frac{ 1-p^{3\alpha(1-s)}}{1-p^{3(1-s)}}
\right).
\end{align*}
Hence
$$
B_\a(2)=
-p^{-4-2\alpha}(1-p^{-\alpha})+
p^{-3}(1-1/p)
\left(
\frac{ 1-p^{-2\alpha}}{1-p^{-2}}-
\frac{ 1-p^{-3\alpha}}{1-p^{-3}}
\right).
$$
Finally, we have
\begin{align*}
C_{\a}(s)
=~&
-(1-2/p)p^{3(1+\alpha)(1-s)-1}
+\sum_{1\leq j_2\leq \alpha
}
p^{-3j_2(s-1)}(1-1/p)(1-2/p)\\
=~&
(1-2/p)p^{3(1-s)}\left(-p^{3\alpha(1-s)-1}+\frac{(1-1/p)(1-p^{3\alpha(1-s)})}{1-p^{3(1-s)}}\right).
\end{align*}

Putting this  together, we see that
\begin{align*}
\widehat H_p(2;0,a_2)&=
1+S_1(2;\a)+S_2(2;\a)+S_3(2;\a)\\
&=
\left(1+\frac{1}{p}+\frac{1}{p^2}\right)\left(1-\frac{1}{p^{2+2\alpha}}\right).
\end{align*}

\subsection{Conclusion}
We have
$
Z(s)=
Z_1(s)+Y(s)
$
where $Y(s)$
is holomorphic and bounded for $\Re(s)>3/2$ and 
$$
Z_1(s)=
\sum_{m\in \ZZ}
 \widehat H(s;0,m),
\quad 
\widehat H(s;0,m)
=\prod_v
\widehat H_v(s;0,m).
$$
Our work shows that 
$\hat H(s;0,m)=\zeta(s-1)E_m(s)$, where $E_m(s)$ is holomorphic and bounded for $\Re(s)\geq 2$. Furthermore, 
$
E_0(2)=\zeta(3)^{-1}\widehat H_\infty(2;\0)
$
and 
$$
E_m(2)=\frac{\sigma_{-2}(m)}{\zeta(2)\zeta(3)} \widehat H_\infty(2;0,m) \quad (m\neq 0),
$$
where
$
\sigma_{-2}(m)=\sum_{d\mid m} d^{-2}.
$
We extend the latter function to all of $\ZZ$ by setting $\sigma_{-2}(0)=\zeta(2)$.
Finally, we  recall that 
$$
\widehat H_\infty(2;0,m)
= 2\pi \int_{0}^\infty  \frac{\cos(2\pi my) \d y}{
\sqrt{y^6+y^4+2y^2+1}}.
$$

A standard Tauberian theorem (see Tenenbaum 
\cite[\S II.2]{ten}, for example) 
therefore gives an asymptotic formula of 
the shape  $N(V;B)= c B^2+O(B^{\theta})$, for any $\theta>3/2$, 
with 
\begin{align*}
c
&=\frac{1}{2}\sum_{m\in \ZZ} 
E_m(2)=\frac{\pi}{\zeta(2)\zeta(3)}
\sum_{m\in \ZZ} \sigma_{-2}(m)
\int_{0}^\infty  \frac{ \cos (2\pi m y)\d y}{
\sqrt{y^6+y^4+2y^2+1}}.
\end{align*}
In order to show that this is compatible with 
Theorem \ref{t:main} we need to prove that
\begin{align*}
\frac{1}{\zeta(3)}\sum_{m\in \ZZ}\sigma_{-2}(m)
\int_{0}^\infty & \frac{ \cos (2\pi m y )\d y}{
\sqrt{y^6+y^4+2y^2+1}}
=\frac{1}{2} \sum_{\substack{(\lambda,\mu)\in \ZZ_{\mathrm{prim}}^2\\
\mu\neq 0}} \frac{1}{\sqrt{f(\lambda,\mu)}},
\end{align*}
with $f(\lambda,\mu)$ as in the statement of the theorem. 
But this follows from a straightforward application of Poisson summation. Thus, 
using the M\"obius function to detect the condition $\gcd(\lambda,\mu)=1$, we find 
that
\begin{align*}
\frac{1}{2} \sum_{\substack{(\lambda,\mu)\in \ZZ_{\mathrm{prim}}^2\\
\mu\neq 0}} \frac{1}{\sqrt{f(\lambda,\mu)}}
&=
\frac{1}{\zeta(3)}\sum_{v=1}^{\infty} \sum_{u\in \ZZ}\frac{1}{\sqrt{f(u,v)}}\\
&=
\frac{1}{\zeta(3)}\sum_{v=1}^{\infty} \sum_{a\in \ZZ}  \int_{-\infty}^\infty \frac{e(at)\d t}{\sqrt{f(t,v)}}\\
&=
\frac{1}{\zeta(3)}\sum_{v=1}^{\infty} \frac{1}{v^2}\sum_{a\in \ZZ}  \int_{-\infty}^\infty \frac{e(av y)\d y}{\sqrt{f(y,1)}}
\\
&=\frac{1}{\zeta(3)}
\sum_{m\in \ZZ}\sigma_{-2}(m) \int_{0}^\infty \frac{\cos(2\pi m y)\d y}{\sqrt{f(1,y)}},
\end{align*}
as required.


\begin{thebibliography}{9}

\bibitem[BT98a]{BTa}
V. Batyrev and Y. Tschinkel, 
Manin's conjecture for toric varieties. {\em J. Alg. Geom.} {\bf 7} (1998), 15--54.

\bibitem[BT98b]{BT}
V. Batyrev and Y. Tschinkel, 
Tamagawa numbers of polarized algebraic varieties. 
{\em Ast\'erisque} {\bf 251} (1998), 299--340. 

\bibitem[Bil98]{billard}
H. Billard,
R\'epartition des points rationnels des surfaces g\'eom\'etriquement r\'egl\'ees rationnelles. 
{\em Ast\'erisque} {\bf 251} (1998), 79--89. 


\bibitem[BHBS06]{pila} T.D. Browning, D.R. Heath-Brown and 
P. Salberger, Counting rational points on algebraic varieties. {\em
  Duke Math. J.} 
{\bf 132} (2006), 545--578.


\bibitem[CLT00]{CL-T}
A. Chambert-Loir and  Y. Tschinkel,
Points of bounded height on equivariant compactifications of vector groups. I.
{\em Compositio Math.} {\bf 124} (2000),  65--93. 

\bibitem[Dol12]{dolt} I.V. Dolgachev, 
{\em Classical algebraic geometry:
a modern view}. Camb. Univ. Press, Cambridge, 2012. 

\bibitem[FMT89]{f-m-t}
J. Franke, Y.I. Manin and Y. Tschinkel,
Rational points of bounded height on {F}ano varieties. {\em Invent. Math.}  {\bf 95} (1989), 421--435.


\bibitem[FG02]{FG}
K. Fritzsche and H. Grauert, 
{\em From holomorphic functions to complex manifolds.}
Springer-Verlag, New York, 2002.


\bibitem[GHL04]{GHL}
S. Gallot, D. Hulin and J. Lafontaine, 
{\em Riemannian Geometry}. 3rd ed., Springer-Verlag, 2004.

\bibitem[GH13]{cayley}
J. Gmainer and  H. Havlicek,
Isometries and collineations of the Cayley surface. 
{\em Innov. Incidence Geom.} {\bf 2} (2005), 109--127. 



\bibitem[GW10]{GW}
U.  G\"ortz and T. Wedhorn, {\em Algebraic geometry I. Schemes with examples and exercises.} Advanced Lectures in Mathematics. Vieweg and Teubner, Wiesbaden, 2010.

\bibitem[Har77]{hart}
{R. Hartshorne}, \textit{Algebraic geometry}.
Springer-Verlag, New York, 1977.


\bibitem[LeB14]{boudec}
P. Le Boudec, 
Affine congruences and rational points on a certain cubic surface. {\em Algebra \& Number Theory}
{\bf 8} (2014), 1259--1296.

\bibitem[Ohb90]{O}
A. Ohbuchi, 
On the projective normality of some varieties of degree 5. 
{\em Pacific J. Math.} {\bf 144} (1990), 313--325. 

\bibitem[Pey95]{Pey95}
{E. Peyre}, {Hauteurs et mesures de Tamagawa sur les
vari\'{e}ti\'{e}s de Fano}. {\em Duke Math. J.} {\bf79}
 (1995), 101--218.


\bibitem[Pey03]{Pey03}
{E. Peyre}, {Points de hauteur born\'{e}e, topologie ad\'{e}lique et mesures de Tamagawa}.
{\em J. Th\'{e}or. Nombres Bordeaux} {\bf15} (2003), 319--349.

\bibitem[Sal98]{salberger}
P. Salberger,
Tamagawa measures on universal torsors and points of bounded
height on Fano varieties.
{\em Ast\'erisque} {\bf 251} (1998), 91--259. 


\bibitem[Ser97]{serre}
J.-P. Serre, {\em Lectures on the Mordell--Weil theorem}. 
3rd ed., Friedr. Vieweg \& Sohn, 1997. 

\bibitem[Ten95]{ten}
G. Tenenbaum, {\em Introduction to analytic and probabilistic number theory}. Camb.  Univ. Press, 1995.

\end{thebibliography}
\end{document}